\documentclass[10pt,a4paper,oneside,reqno,notitlepage]{amsart}
\usepackage{amssymb,color}
\usepackage{graphicx}
\usepackage[arrow]{xy}
\usepackage{pb-diagram,pb-xy}
\usepackage{hyperref}
\usepackage{url}

\topmargin=-.5cm\textheight=23.cm\textwidth=16.cm
\oddsidemargin=-0.25cm \evensidemargin=-0.25cm
\usepackage{amsfonts}
\usepackage{amscd,amsmath}
\usepackage{euscript}

\theoremstyle{plain}
\newtheorem{theorem}{Theorem}[section]
\newtheorem{prop}[theorem]{Proposition}

\newtheorem{lemma}[theorem]{Lemma}
\newtheorem{remark}[theorem]{Remark}
\newtheorem{example}[theorem]{Example}

\newtheorem{ques}[theorem]{Question}

\newtheorem{FP}[theorem]{Fiber-Product Theorem}
\newtheorem*{main}{Theorem~\ref{thm:main}}

\theoremstyle{definition}
\newtheorem{defi}[theorem]{Definition}

\newtheorem*{note*}{Notation}

\newcommand{\rst}[1]{\ensuremath{{\mathbin\upharpoonright}%
\raise-1ex\hbox{$\small{#1}$}}}

\def\Z{\mathbb{Z}}

\def\E{\mathcal E}
\def\F{\mathcal F}

\begin{document}
\title{Group localization and two problems of Levine}

\author{Roman Mikhailov}
\address{Chebyshev Laboratory, St. Petersburg State University, 14th Line, 29b,
Saint Petersburg, 199178 Russia and St. Petersburg Department of
Steklov Mathematical Institute} \email{rmikhailov@mail.ru}
\urladdr{http://www.mi.ras.ru/\~{}romanvm/pub.html}
\thanks{This research of the first author is supported by the Chebyshev
Laboratory  (Department of Mathematics and Mechanics, St.
Petersburg State University)  under RF Government grant
11.G34.31.0026}

\author{Kent E. Orr}
\address{Dept of Mathematics, Indiana University, Bloomington IN 47405}
\email{korr@indiana.edu}
\thanks{The Simons Foundation generously supports the second author through grant 209082.  These results were obtained while the second author enjoyed the city of St. Petersburg and the support of the Chebyshev Laboratory, St. Petersburg State University.  This author appreciates their support and the excellent research atmosphere provided.}

\maketitle

\begin{abstract}
A. K. Bousfield's $H\Z$-localization of groups inverts homologically two-connected homomorphisms of groups.  J. P. Levine's algebraic closure of groups inverts homomorphisms between finitely generated and finitely presented groups which are homologically two-connected and for which the image normally generates.  We resolve an old problem concerning Bousfield $H\Z$-localization of groups, and answer two questions of Levine regarding algebraic closure of groups.  In particular, we show that the kernel of the natural homomorphism from a group $G$ to it's Bousfield $H\Z$-localization is not always a $G$-perfect subgroup.  In the case of algebraic closure of groups, we prove the analogous result that this kernel is not always an invisible subgroup.
\end{abstract}

\section{introduction}


In~\cite{Bousfield:1974-1,Bousfield:1975-1,Bousfield:1977}, A. K. Bousfield defines his homology localization of spaces, and initiates an investigation of a corresponding localization of groups and modules.  Roughly, the $H\Z$-localization of spaces is the initial functor inverting homology equivalences of spaces. If $E(G)$ is the group localization of a group $G$, and $E(X)$ the homology localization of a space $X$, then $E(\pi_1(X)) \cong \pi_1(E(X))$.  Homology localization of groups inverts homomorphisms that are isomorphisms on abelianization and epimorphisms on second homology.

In analogy with Bousfield's $H\Z$-localization of spaces, P. Vogel, in unpublished work, defines a localization of spaces which inverts maps between finite complexes $X \to Y$ with a contractible cofiber. The primary significance of Vogel's work arises from its connection to homology cobordism classes of manifolds and embedding theory. (See, for example, \cite{Cha:2004-1,Cha-Orr:2010-02,Heck:2010-01,LeDimet:1988-1,Levine:1988-1,Levine:1989-1,Levine:1989-2,Levine:1994-1,Sakasai:2006-1}.)

In~\cite{Levine:1989-1}, J. P. Levine defines his algebraic closure of groups.  (An earlier version of this type of closure was defined by M. Gutierrez~\cite{Gutierrez:1979-1}.)  Similarly to the connection between Bousfield's $H\Z$-localization of groups and spaces, the algebraic closure of a group $G$ describes how the Vogel localization of a space effects the fundamental group of the space.  We denote both Bousfield's $H\Z$-closure of the group $G$ and Levine's algebraic closure of $G$ by $E(G)$. 

The natural homomorphism $G \to E(G)$ rarely embeds $G$.  In particular, for $H \Z$-localization of
groups, Bousfield defines a family of subgroups which always lie in the kernel of the natural homomorphism $G \to E(G)$.  A normal subgroup $N \mathrel{\unlhd} G$ is called a {\em $G$-perfect subgroup} if $N = [G, N]$.  In this study of algebraically closed groups, Levine's adds the condition that the $G$-perfect subgroup is normally generated by a finite subset of group elements, and calls these groups {\em invisible groups.}

We replace the terms $G$-perfect and invisible with the term {\em relatively perfect} subgroup.  One can easily tell from context if relatively perfect means Bousfield's $G$-perfect subgroup, or Levine's invisible subgroup.
 
Bousfield, (respectively Levine) prove that $H\Z$-local groups (respectively, algebraically closed groups) do not contain non-trivial relatively perfect subgroups.

{\em In particular, for any group $G$, all relatively perfect
subgroups of $G$ lie in $\ker\{ G \to E(G)\}$.}

\begin{ques}\label{ques:kernel}
What is the $\ker\{G \to E(G)\}$?  More precisely, we ask this question.  For Bousfield's
$H\Z$-localization of  a group $G$, is this kernel a relatively
perfect subgroup? For Levine's group closure, is this kernel the
union of the relatively perfect subgroups?
\end{ques}

Progress has been negligible, and constructions that might produce
interesting examples rare. Bousfield does not appear to have
explicitly asked the question in any paper.   However, the problem
has long circulated the field.  J. Rodriguez and D. Scenvenls gave equivalent formulations for this problem, without resolution~\cite{Rodriguez-Scevenels:2004-1}.

In~\cite{Levine:1990-1}, Question 4, Levine explicitly asks the above
question. He also asks, in Question 5 of that paper, whether the
kernel of a homologically $2$-connected homomorphism $G \to
\Gamma$ is a union of relatively perfect subgroups when $G$ is
finitely generated.

The main Theorem of this paper gives a negative answer
Question~\ref{ques:kernel}, including Levine's Question~$4$ and~$5$
from~\cite{Levine:1990-1}.  To answer this question we apply a new construction arising from our extension of a result by M. Bridson and A. Reid~\cite{Bridson-Reid:2012-1}, which we call the {\em Fiber-Product Theorem.}  We state our main theorem below.  The Fiber-Product Theorem is stated and proven in Section~\ref{FPTheorem}.

\begin{main}
Let $Q$ be any finitely presented perfect group with a balanced presentation, such that $H_3(Q) \not \cong 0$. Suppose $Q \cong F/R$ where $F$ is free on the generators in the balanced presentation of $Q$.  Consider the fiber product, $F \times_Q F \cong \E/K$, where $\E$ is free on the generating set of $Q$.  Let $G := \E/[\E,K]$ be the associated free central extension of $\E/K$.  Then
\begin{enumerate}
\item $G$ is a finitely generated group,
\item $\gamma_{\omega}(G)\neq \gamma_{\omega +1}(G)=\{1\}$,\item and if $E(G)$ represents either the Bousfield $H \Z$-localization of $G$, or if $E(G)$ is Levine's algebraic closure of $G$ and in
addition in this case, $Q$ is a finite group, then
\[
\gamma_{\omega}(G) = \ker\{G \to E(G)\}.
\]
In particular, in both cases, the $\ker\{G \to E(G)\}$ is
not a relatively perfect subgroup of $G$.
\end{enumerate}
\end{main}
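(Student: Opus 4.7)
The plan is to treat the four pieces of the statement in order of increasing difficulty. The first claim is immediate: $G=\E/[\E,K]$ is a quotient of $\E$, which by hypothesis is free on the finite generating set of the balanced presentation of $Q$, so $G$ is finitely generated. For the second claim I would first establish the easier half, $\gamma_{\omega+1}(G)=\{1\}$. The natural projection $G\twoheadrightarrow \E/K\cong F\times_Q F$ has central kernel $K/[\E,K]$ by the defining property of the free central extension. Since $F\times_Q F$ sits inside the product $F\times F$ of two free groups and every free group is residually nilpotent (Magnus), $F\times_Q F$ is residually nilpotent; hence $\gamma_\omega(G)\subseteq K/[\E,K]$, and this central subgroup satisfies $[\gamma_\omega(G),G]=\{1\}$, giving $\gamma_{\omega+1}(G)=\{1\}$.

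The main technical obstacle is the other half of (2), namely that $\gamma_\omega(G)$ is nontrivial. This is where the hypotheses that $Q$ is perfect with a balanced presentation and $H_3(Q)\neq 0$ must be used. The strategy is to identify $\gamma_\omega(G)$ with the full central kernel $K/[\E,K]$ and then to show that this central kernel receives a nonzero map from $H_3(Q)$. Since $Q$ is perfect with a balanced presentation, the Hopf formula forces $H_2(Q)=0$, and a five-term/LHS spectral sequence analysis of the extension $1\to K\to \E\to F\times_Q F\to 1$ together with the homology of the fiber product $F\times_Q F$ should put $H_3(Q)$ inside $K/[\E,K]$. I expect to invoke the Emmanouil--Mikhailov and Mikhailov--Passi lemmas cited in the introduction both to produce this $H_3(Q)$-class and to check that every element of $K/[\E,K]$ lies in $\gamma_n(G)$ for all finite $n$, so that $\gamma_\omega(G)=K/[\E,K]$ and is nonzero.

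For the third claim I would apply the Fiber-Product Theorem of Section~\ref{FPTheorem}. In the Bousfield setting, the expected output of that theorem is that $P:=F\times_Q F$ is $H\Z$-local and that the projection $G\twoheadrightarrow P$ is the $H\Z$-localization map, so that $\ker\{G\to E(G)\}=K/[\E,K]=\gamma_\omega(G)$. In Levine's setting the finiteness hypothesis on $Q$ is the extra input that makes the analogous statement hold for algebraic closure: the fiber product of two finitely generated free groups over a finite group should be its own algebraic closure, and the same identification follows. Finally, the ``in particular'' sentence is automatic from what has been done: $\gamma_\omega(G)$ is central and nontrivial, so $[G,\gamma_\omega(G)]=\{1\}\neq\gamma_\omega(G)$, ruling out $G$-perfectness in the Bousfield sense; in the Levine setting, any invisible subgroup $N$ satisfies $N=[G,N]$, hence $N\subseteq\gamma_\omega(G)$ by induction, and then $N=[G,N]\subseteq[G,\gamma_\omega(G)]=\{1\}$, so every invisible subgroup is trivial and no union of them can equal the nontrivial group $\ker\{G\to E(G)\}$.
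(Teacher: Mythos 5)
Your outline gets items (1) and the easy half of (2) right, and your closing observation (every relatively perfect subgroup of $G$ is trivial because $\gamma_{\omega+1}(G)=\{1\}$) is a correct way to deduce the ``in particular'' clause once the rest is in place. But the central identification you propose is wrong: $\gamma_\omega(G)$ is \emph{not} the full central kernel $K/[\E,K]$. By the Mikhailov--Passi lemma, $\gamma_\omega(G)\cong\phi_\omega(F\times_Q F)$, the transfinite Dwyer subgroup of $H_2(F\times_Q F)\cong K/[\E,K]$; combining Stallings' theorem with the Fiber-Product Theorem, this subgroup is exactly $\ker\{H_2(F\times_Q F)\to H_2(F\times F)\}$, which is a \emph{proper} subgroup of $K/[\E,K]$ since the Fiber-Product Theorem also says that map is onto $H_2(F\times F)\cong\Z^{k^2}\neq 0$. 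What the paper actually needs, and proves, is only that this kernel is nontrivial, which is where $H_3(Q)\not\cong 0$ enters (statement (2) of the Fiber-Product Theorem, whose proof rests on the Emmanouil--Mikhailov lemma). So your route to the hard half of (2) can be repaired, but not by identifying $\gamma_\omega(G)$ with all of $K/[\E,K]$.

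The plan for item (3) fails more seriously. The fiber product $F\times_Q F$ is not $H\Z$-local and $G\twoheadrightarrow F\times_Q F$ is not the localization map: localization maps are homologically $2$-connected, whereas for the central extension $K/[\E,K]\rightarrowtail G\twoheadrightarrow \E/K$ the five-term sequence shows $H_2(G)\to H_2(\E/K)$ is the \emph{zero} map, because the connecting homomorphism $H_2(\E/K)\to K/[\E,K]$ is the Hopf isomorphism (here $K\subseteq[\E,\E]$ as $Q$ is super-perfect). Likewise, a fiber product of free groups over a finite group is not its own algebraic closure; the finiteness of $Q$ is used in the paper for a different purpose, namely to make $F\times_Q F$, hence $G$, finitely presented via the 1-2-3 lemma, since Levine's closure only inverts $2$-connected maps satisfying finiteness and normal-generation hypotheses. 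The paper's identification of $\ker\{G\to E(G)\}$ instead compares $G$ with $\F/[\F,L]$, the free central extension of $F\times F$ on the same number of generators: a spectral-sequence comparison built on the Fiber-Product Theorem shows $\Phi_{(Q,\mathcal P)}\colon G\to\F/[\F,L]$ is homologically $2$-connected, hence induces an isomorphism of localizations (resp.\ closures); since $\F/[\F,L]$ is residually nilpotent, $\gamma_\omega(G)$ maps trivially to it and so lies in the kernel, while the reverse inclusion $\ker\{G\to E(G)\}\subseteq\gamma_\omega(G)$ follows because $G\to E(G)$ induces isomorphisms on all finite lower central quotients. None of this machinery appears in your proposal, and the substitute you offer (locality of $F\times_Q F$) is false.
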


We provide examples of finite, and finitely presented, perfect groups, as used in Theorem~\ref{thm:main}, within the paper.  We note, for now, the particularly interesting example, the {\em Binary Icosahedral group}, sometimes called the {\em Poincar\'e group}, that is, the fundamental group of the Poincar\'e homology sphere.

\section{$H\Z$-localization and algebraic closure of groups}

\subsection{$H\Z$-localization of groups} We discuss, briefly, the $H\Z$-localization and Levine closure of groups, offering a little more detail than in the introduction. We recommend~\cite{Bousfield:1974-1,Bousfield:1975-1} and  \cite{Levine:1989-1} for a careful treatment of these topics.

\begin{defi} A homomorphism of groups $G \to \Gamma$ is called {\em homologically $2$-connected} if the induced homomorphism on homology, $H_{k}(G) \to H_{k}(\Gamma)$, is an isomorphism for $k = 1$ and onto for $k = 2$.
\end{defi}

\begin{defi}\label{defi:local group} A group $\Gamma$ is {\em $H\Z$-local} if given any diagram of homomorphisms as follows, with $G \to H$ homologically $2$-connected,
\[
\begin{diagram}
\node{G} \arrow{e}\arrow{se} \node{H}\arrow{s,..}\\
\node[2]{\Gamma}
\end{diagram}
\]
there is a {\em unique} homomorphism $H \to \Gamma$ making the diagram commute.
\end{defi}

\begin{defi}\label{defi:localization}  The $H\Z$-localization of a group $G$ is a functor $E$ which takes groups to $H\Z$-local groups, and for each group $G$ a natural, homologically $2$-connected, homomorphism $G \to E(G)$ such that given any homomorphism $G \to \Gamma$ with $\Gamma$ an $H\Z$-local group, there is a unique homomorphism $E(G) \to \Gamma$ making the following diagram commute.
\[
\begin{diagram}
\node{G} \arrow{e} \arrow{se} \node{E(G)}\arrow{s,..}\\
\node[2]{\Gamma}
\end{diagram}
\]
\end{defi}
That is, the $H\Z$-localization is the initial local group determined by the group $G$.

Bousfield constructs and investigates the unique (up to natural equivalence of functors) $H\Z$-localization of groups.  In particular, he shows that if $X$ is a space with $\pi_{1}(X) \cong G$, then $\pi_{1}(E(X)) \cong E(G)$, where $E(X)$ is the $H\Z$-localization of the space $X$.

\subsection{Levine's algebraic closure of groups}

\begin{defi}
A group $\Gamma$ is {\em algebraically closed} if given any homologically $2$-connected homomorphism $G \to H$ where $G$ is finitely generated and $H$ is finitely presented, and such that the image of $G \to H$ normally generates $H$, and a homomorphism $G \to \Gamma$, there is a unique extension $H\to \Gamma$ making the diagram in Definition~\ref{defi:local group} commute.
\end{defi}

Levine defines closed groups using systems of equations over the group, but observes it satisfies the above properties.  

As in Definition~\ref{defi:localization}, Levine's algebraic closure of groups is a functor, $E$, from groups to algebraically closed groups together with a natural homomorphism for each group $G$, $G \to E(G)$, which is initial among homomorphisms to algebraically closed groups.

We mention briefly that the $HR$-localization of a group has also been described as an algebraic closure of groups~\cite{Farjoun-Orr-Shelah:1989}.  Solving equations over groups plays no role in the theorems proven herein.

\subsection{Relatively perfect subgroups}

\begin{defi}
A subgroup $N \mathrel{\unlhd} G$ is {\em relatively perfect} if
$N = [G,N]$.
\end{defi}

Levine calls a group $N \mathrel{\unlhd} G$ {\em invisible} if $N$
is relatively perfect and, additionally, is  normally generated in
$G$ by a finite set of elements.  We avoid the term {\em
invisible} and when working in the context of Levine's group
closure we call these groups relatively perfect as well.
Whether relatively perfect includes the normal generation
condition is clear from context, depending on whether we are
discussing Bousfield $H \Z$-localization, or Levine's group
closure.  That is, the notion we call relatively perfect, in the
context of algebraic closure of groups will additionally require
that $N$ is normally generated by a finite collection of elements.

Bousfield, and separately Levine, prove this theorem~\cite[Proposition~1.2]{Bousfield:1977}\cite[Lemma~3]{Levine:1989-1}.

\begin{theorem}[Bousfield, Levine]
If $G$ is an $H \Z$-local (resp. algebraically closed group) and
$N \mathrel{\unlhd} G$ is a relatively perfect subgroup, then $N
\cong \{1\}$.

In particular, for any group $G$, all relatively perfect
subgroups of $G$ lie in $\ker\{ G \to E(G)\}$.
\end{theorem}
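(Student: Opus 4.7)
The plan is to use the universal property defining $G$ to produce a left inverse of the quotient $q\colon G \to G/N$, which immediately forces $N = \ker(q) = \{1\}$. The crucial preliminary step is verifying that $q$ is itself homologically $2$-connected.

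To establish this, apply the Stallings--Hopf five-term exact sequence to the extension $1 \to N \to G \to G/N \to 1$:
\[
H_2(G) \longrightarrow H_2(G/N) \longrightarrow N/[G,N] \longrightarrow H_1(G) \longrightarrow H_1(G/N) \longrightarrow 0.
\]
The relative-perfectness hypothesis $N = [G,N]$ kills the middle term, so $H_1(G) \to H_1(G/N)$ is an isomorphism and $H_2(G) \to H_2(G/N)$ is onto; thus $q$ is homologically $2$-connected.

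Now assume $G$ is $H\Z$-local. Applying Definition~\ref{defi:local group} to the diagram formed by $q$ and $\mathrm{id}_G\colon G \to G$ furnishes a unique $s\colon G/N \to G$ with $s \circ q = \mathrm{id}_G$. For any $n \in N$, we then have $n = s(q(n)) = s(1) = 1$, so $N = \{1\}$. The ``in particular'' clause is then standard: for any $G$ and relatively perfect $N \trianglelefteq G$, let $\phi\colon G \to E(G)$ be the canonical map, expand any generator $e\,\phi(n)\,e^{-1}$ of the normal closure $\tilde N$ of $\phi(N)$ in $E(G)$ as $\prod_i [e\,\phi(g_i)\,e^{-1},\,e\,\phi(m_i)\,e^{-1}]^{\pm 1}$ using a factorisation $n = \prod_i [g_i, m_i]^{\pm 1}$; this shows $\tilde N = [E(G), \tilde N]$, so $\tilde N = \{1\}$ by the first part, whence $N \subseteq \ker\{G \to E(G)\}$.

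The algebraic closure case runs along the same lines, and this is where the main technical obstacle appears: the universal property requires the source finitely generated and the target finitely presented, whereas $G$ and $G/N$ need not satisfy these. Using the finitely many normal generators $n_1,\dots,n_k$ of $N$ together with expressions $n_i = \prod_j[g_{ij}, m_{ij}]$ witnessing $n_i \in [G,N]$, one passes to a finitely generated subgroup $G_0 \subseteq G$ containing the $g_{ij}$ and the conjugators needed to express the $m_{ij}$, and forms a finitely presented surrogate for $G_0/(N \cap G_0)$ via a finite-rank free cover $F \twoheadrightarrow G_0$ with lifted relations. The delicate point is arranging this presentation so that the analogous five-term argument still delivers homological $2$-connectedness. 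Once in place, the universal property of algebraic closure again produces a section, and finite normal generation of $N$ ensures that its image in $E(G)$ remains finitely normally generated, so the ``in particular'' clause follows by the same commutator-transport argument.
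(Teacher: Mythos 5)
Your $H\Z$-local half is correct and is essentially the cited argument of Bousfield (the paper itself only cites \cite[Proposition~1.2]{Bousfield:1977} and \cite[Lemma~3]{Levine:1989-1} rather than proving the statement): the five-term sequence plus $N=[G,N]$ shows $G\to G/N$ is homologically $2$-connected, localness applied to $\mathrm{id}_G$ yields a retraction, so $N=1$; and your commutator-transport argument that the normal closure of $\phi(N)$ in $E(G)$ is again relatively perfect (and, in Levine's setting, still finitely normally generated) is the standard deduction of the ``in particular'' clause.

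The gap is the algebraically closed half, which you do not actually prove: you name the obstacle (the lifting property only applies to homologically $2$-connected maps from a finitely generated group to a finitely presented one with normally generating image) and then gesture at a finitely presented ``surrogate'' for $G_0/(N\cap G_0)$, leaving precisely the delicate point open. As sketched, the route is doubtful: relative perfectness does not pass to subgroups, so $N\cap G_0$ need not equal $[G_0,N\cap G_0]$ and the five-term argument gives no $2$-connectivity for such a quotient-style surrogate; nor is it clear what map the universal property would produce. The argument that actually works (it is Levine's ``unique solution of a contractible system of equations'' proof in homological form) is built from the data you already listed: write each normal generator as $n_i=v_i(n_1,\dots,n_k,a_1,\dots,a_r)$, where $v_i$ is a product of commutators $[g,m]$ with each $m$ a product of conjugates of the $n_j^{\pm1}$, and $a_1,\dots,a_r\in G$ are the finitely many group elements occurring. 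Let $A$ be free on $\alpha_1,\dots,\alpha_r$, $X$ free on $\xi_1,\dots,\xi_k$, and $H=(A*X)/\langle\langle \xi_i^{-1}v_i(\xi,\alpha)\rangle\rangle$, a finitely presented group. Setting $\xi=1$ kills each $v_i$, so the image of $A$ normally generates $H$; each relator abelianizes to $-\xi_i$, so $A\to H$ is an $H_1$-isomorphism and, by Hopf's formula, $H_2(H)=0$ because the $k$ relator classes map to independent elements of $(A*X)_{ab}$; hence $A\to H$ is homologically $2$-connected with $A$ finitely generated and $H$ finitely presented. Now both $\xi_i\mapsto n_i,\ \alpha_j\mapsto a_j$ and $\xi_i\mapsto 1,\ \alpha_j\mapsto a_j$ define homomorphisms $H\to G$ extending $\alpha_j\mapsto a_j$, and the \emph{uniqueness} clause of algebraic closedness forces them to coincide, giving $n_i=1$ and $N=1$. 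Note that this half cannot be run as a section/retraction argument like the Bousfield case; it needs uniqueness applied to two different extensions, which is exactly what your sketch is missing.
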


\section{Two lemmas: group homology and the lower central series}

This section contains the statement and proof of two results (of Emmanouil-Mikhailov and Mikhailov-Passi, respectively) fundamental to proving the main results of this paper.  We include these short proofs to assure completeness and readability.

We first establish notation for elementary spectral sequence arguments.

For a group extension
\[
1 \to R \to G \to Q \to 1
\]
and any $\Z Q$-module $M$, the Hochschield-Serre spectral sequence gives a filtration
\[
\{0\} \subset D_{0,n} \subset D_{1,n-1} \subset \cdots \subset D_{n,0} = H_n(G; M)
\]
with associated graded
\[
E_{p,q}^{\infty} \cong D_{p,n-p}/D_{p-1,n-p+1}
\]
and
\[
E_{p,q}^2 = H_{p}(Q, H_{q}(R; M)) \Rrightarrow H_{p+q}(G; M).
\]

\begin{defi}\label{defi:super-perfect}
A group $G$ is {\em super-perfect} if $H_{1}(G) \cong H_{2}(G) \cong 0$.
\end{defi}

\begin{example}
The Binary Icosahedral group
\[
Q := \langle a, b \quad \vert \quad a^5=b^3,\ b^3 = (ba)^2 \rangle
\]
is an order $120$ super-perfect group that arises as the fundamental group of the Poincar\'e sphere, a compact, orientable $3$ manifold with perfect fundamental group.
\end{example}

For the above elegant and geometric example, $H_3(Q)$ is cyclic of order $120$.  This group suffices to construct examples illustrating each of the main theorems of this paper.

We emphasize that super-perfect groups are plentiful.  Indeed, suppose that a finitely presented group $G$ has a balanced presentation, that is, the number of generators and relations are the same and finite.  If $H_{1}(G) \cong 0$ then one constructs, using the presentation for $G$, a two-dimensional CW-complex $X$ with $\pi_{1}(X) \cong G$ and having a cell of dimension one and two for each generator of $G$ in the presentation.  This implies that $H_{2}(X) \cong 0$, and since $X$ is the $2$-skeleton for a $K(\pi_{1}(X), 1) = K(G,1)$ complex, it follows that $H_{2}(G) \cong 0$.

The following is a special case of Proposition~$1.25$ from~\cite{Emmanouil-Mikhailov:2008}.

\begin{lemma}\label{lemma:EM}
Suppose we have a group presentation
\[
1 \to R \to F \to Q \to 1
\]
where $F$ is a free group.  Let $G$ be the semi-direct product, $G := R \rtimes F$, where $F$ acts on $R$ by conjugation.  Then there is an exact sequence
\[
H_{4}(Q) \to H_{0}(Q; R_{ab}\otimes R_{ab}) \to H_{2}(G) \to H_{3}(Q) \to 0.
\]
\end{lemma}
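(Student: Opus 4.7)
The plan is to run the Lyndon--Hochschild--Serre (LHS) spectral sequence three times, exploiting the freeness of $F$ (and, by Nielsen--Schreier, of $R \subset F$). First, apply LHS to the extension $1 \to R \to G \to F \to 1$, where $F$ acts on $R$ by conjugation. Since $F$ is free, $H_p(F; M) = 0$ for $p \geq 2$, so the $E^2$-page has only two nonzero columns, and the total-degree-$2$ short exact sequence reads
\[
0 \to H_0(F; H_2(R)) \to H_2(G) \to H_1(F; R_{ab}) \to 0.
\]
Since $R$ is free, $H_2(R) = 0$, so $H_2(G) \cong H_1(F; R_{ab})$, reducing the problem to computing $H_1(F; R_{ab})$.

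Second, run LHS for $1 \to R \to F \to Q \to 1$ with coefficients in the $F$-module $R_{ab}$ (whose action factors through $Q$). Because $R$ acts on $R_{ab}$ by inner automorphisms, the $R$-action is trivial; combined with $H_q(R) = 0$ for $q \geq 2$, universal coefficients yield $H_0(R; R_{ab}) = R_{ab}$, $H_1(R; R_{ab}) = R_{ab} \otimes R_{ab}$, and $H_q(R; R_{ab}) = 0$ for $q \geq 2$. Only the two rows $E^2_{p,0} = H_p(Q; R_{ab})$ and $E^2_{p,1} = H_p(Q; R_{ab} \otimes R_{ab})$ survive, and the sole nontrivial differential is $d_2 \colon H_p(Q; R_{ab}) \to H_{p-2}(Q; R_{ab} \otimes R_{ab})$. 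Since $F$ is free, $H_n(F; R_{ab}) = 0$ for $n \geq 2$, so all $E^\infty$-terms in total degree $\geq 2$ vanish; extracting total degree $1$ then yields
\[
0 \to \text{coker}\bigl(d_2 \colon H_2(Q; R_{ab}) \to H_0(Q; R_{ab} \otimes R_{ab})\bigr) \to H_1(F; R_{ab}) \to H_1(Q; R_{ab}) \to 0.
\]

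Third, run the same LHS spectral sequence with trivial $\Z$-coefficients. By an identical two-row argument, the vanishing of $H_n(F)$ for $n \geq 2$ forces the differential $d_2 \colon H_{p+2}(Q) \to H_p(Q; R_{ab})$ to be an isomorphism for all $p \geq 1$. Substituting $H_1(Q; R_{ab}) \cong H_3(Q)$ and $H_2(Q; R_{ab}) \cong H_4(Q)$ into the previous short exact sequence, together with $H_2(G) \cong H_1(F; R_{ab})$, yields exactly
\[
H_4(Q) \to H_0(Q; R_{ab} \otimes R_{ab}) \to H_2(G) \to H_3(Q) \to 0,
\]
as required.

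The main step requiring care is naturality: one must verify that, under the isomorphism $H_4(Q) \cong H_2(Q; R_{ab})$ produced by the trivial-coefficient spectral sequence, the map $H_4(Q) \to H_0(Q; R_{ab} \otimes R_{ab})$ in the final sequence coincides with the $d_2$-differential of the spectral sequence with coefficients $R_{ab}$. Once this compatibility of differentials across the two LHS spectral sequences is established, the four-term exact sequence of the lemma drops out directly.
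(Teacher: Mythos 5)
Your proof is correct and follows essentially the same route as the paper: the identification $H_2(G)\cong H_1(F;R_{ab})$ from the extension $1\to R\to G\to F\to 1$, the isomorphisms $H_{p+2}(Q)\cong H_p(Q;R_{ab})$ for $p\geq 1$ from the trivial-coefficient spectral sequence of $F\to Q$, and the low-degree exact sequence of the $R_{ab}$-coefficient spectral sequence for $1\to R\to F\to Q\to 1$. The naturality issue you flag at the end is not actually needed for the statement as given: since $H_4(Q)$ appears only as a source, one may define the leftmost map to be the composite of the isomorphism $H_4(Q)\cong H_2(Q;R_{ab})$ with $d_2$, and exactness at $H_0(Q;R_{ab}\otimes R_{ab})$ holds regardless.
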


\begin{proof}
First note that $H_{2}(G) \cong H_{1}(F; R_{ab})$.  Here $R_{ab}$
is the abelianization of the group of relations $R$, which can be
viewed as a $\Z Q$-module, where $Q$ acts via conjugation in $F$.
One easily sees the above isomorphism using the spectral sequence
for the extension $1 \to R \to G \to F \to 1$. More precisely
since $F$ and $R$ are free groups,
\[
H_{2}(G) \cong H_{2}(R \rtimes F) = D_{2,0} = E_{1,1}^{2} = H_{1}(F; H_{1}(R)) = H_{1}(F; R_{ab}).
\]

Similarly, using the spectral sequence of the extension $F \to Q$, and that $R$ is free, we observe that for all $p \geq 1$,
\[
H_{p+2}(Q) \cong H_{p}(Q; H_{1}(R)).
\]

Now consider the homology spectral sequence with $R_{ab}$ coefficients associated to
\[
1 \to R \to F \to Q \to 1.
\]
Then
\[
E_{p,0}^{2} = H_{p}(Q; H_{0}(R; R_{ab})) \cong H_{p}(Q; R_{ab}) \cong H_{p+2}(Q), \quad p \geq 1.
\]
Here the second isomorphism follows since $R$ acts on $R$ by
conjugation, and so acts trivially on $R_{ab}$. The last
isomorphism is the observation made in the previous paragraph.
\[
E_{p,q}^{2} = H_{p}(Q; H_{q}(R; R_{ab})) \cong 0, \quad q \geq 2,
\]
since $R$ is free.  Note as well that $E_{1,0}^{\infty} = E_{1,0}^2 \cong H_3(Q)$.

For $q = 1$,
\[
E_{p,1}^{2} = H_{p}(Q ; H_{1}(R; R_{ab})) \cong H_{p}(Q; R_{ab} \otimes R_{ab}).
\]
Hence we have the usual exact sequence of low order terms
\[
E_{2,0}^{2} \xrightarrow{d_{2,0}^{2}} E_{0,1}^{2} \to D_{1,0} \to E_{1,0}^{\infty} \to 0
\]
and using the above computations, this completes the proof.
\end{proof}

Recall the transfinite lower central series of a group $G$.

\begin{defi} For a discrete ordinal $k$, the {\em $k^{th}$ lower central series subgroup of $G$} is defined by $\gamma_{1}(G) = G$, and $\gamma_{k+1}(G) = [G, \gamma_{k}(G)]$, where for normal subgroups $H, K \mathrel{\triangleleft} G$, the symbol $[H, K]$ denotes the subgroup generated by commutators $[h, k] := h^{-1}k^{-1}hk \in G.$  For $\lambda$ a limit ordinal,
\[
\gamma_{\lambda}(G) = \bigcap_{\delta < \lambda}\gamma_{\delta}(G).
\]
\end{defi}

\begin{defi}
A group is residually nilpotent if $\gamma_{\omega}(G) = \cap_{k} \gamma_{k}(G) = 1$.
\end{defi}

Recall as well the Dwyer filtration on the second homology of a group $G$~\cite{Dwyer:1975-1}.

\begin{defi}
For $K$ finite, the $k^{th}$ term in the {\em Dwyer filtration} of the second homology of a group $G$ is defined as
\[
\phi_{k}(G) := \ker\{H_{2}(G) \to H_{2}(G/\gamma_{k-1}(G)\},\
k\geq 2\] and the first transfinite term by
\[
\phi_{\omega}(G) = \cap_{k}\phi_{k}(G).
\]
\end{defi}

The next Lemma appears in~\cite{Mikhailov-Passi:2006-1}.

\begin{lemma}\label{lemma:MP}
Let $G \cong F/R$ where $F$ is a free group.  If $G$ is residually nilpotent, then
\[
\gamma_{\omega}\left(\frac{F}{[F,R]}\right) \cong \phi_{\omega}(G)
\]
\end{lemma}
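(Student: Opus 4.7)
The plan is to compute both sides of the claimed isomorphism directly inside $\tilde G := F/[F,R]$, viewing $\phi_\omega(G)$ as a subgroup of $R/[F,R] \subseteq \tilde G$ via Hopf's formula. I expect to show the two subgroups of $\tilde G$ coincide on the nose.

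First I would identify
\[
\gamma_k(\tilde G) = \gamma_k(F)[F,R]/[F,R],
\]
since $\gamma_k$ of any quotient is generated by the images of iterated commutators from $F$. Taking the intersection over all $k$,
\[
\gamma_\omega(\tilde G) = \Bigl(\bigcap_k \gamma_k(F)[F,R]\Bigr)\Big/[F,R].
\]

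Next I would unravel $\phi_k(G)$ through Hopf's formula. Applying Hopf's formula to $G = F/R$ and to the nilpotent quotient $G/\gamma_{k-1}(G) = F/R\gamma_{k-1}(F)$ gives
\[
H_2(G) \cong (R\cap [F,F])/[F,R], \qquad H_2(G/\gamma_{k-1}(G)) \cong (R\gamma_{k-1}(F)\cap [F,F])/[F,R\gamma_{k-1}(F)],
\]
and the natural map is induced by inclusion. Using the commutator identity $[F,R\gamma_{k-1}(F)] = [F,R]\gamma_k(F)$, a class $x[F,R]$ dies under this map precisely when $x \in [F,R]\gamma_k(F)$, so
\[
\phi_k(G) = \bigl(R\cap \gamma_k(F)[F,R]\bigr)/[F,R],
\]
and hence $\phi_\omega(G) = \bigl(\bigcap_k(R\cap \gamma_k(F)[F,R])\bigr)/[F,R]$. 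This Hopf-formula computation is the main technical step; the rest is bookkeeping.

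The inclusion $\phi_\omega(G)\subseteq \gamma_\omega(\tilde G)$ is immediate, since $R\cap \gamma_k(F)[F,R]\subseteq \gamma_k(F)[F,R]$ term by term. For the reverse inclusion, take any $\bar x\in \gamma_\omega(\tilde G)$ with lift $x\in \bigcap_k \gamma_k(F)[F,R]\subseteq F$. Its image in $G = F/R$ lies in $\bigcap_k \gamma_k(G) = \gamma_\omega(G)$, which is trivial by the residual nilpotence hypothesis on $G$. Hence $x\in R$, so $x\in R\cap \gamma_k(F)[F,R]$ for every $k$, and $\bar x\in \phi_\omega(G)$. This completes the identification. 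Residual nilpotence is used only at this final step, to force lifts of elements of $\gamma_\omega(\tilde G)$ to land in $R$.
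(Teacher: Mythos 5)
Your proposal is correct and follows essentially the same route as the paper: Hopf's formula identifies $\phi_k(G)$ with $(R\cap\gamma_k(F)[F,R])/[F,R]$ inside $F/[F,R]$, giving one inclusion term by term, and residual nilpotence of $G$ forces lifts of elements of $\gamma_\omega(F/[F,R])$ into $R$, giving the reverse. The only difference is presentational (you compute both filtrations explicitly and then compare), not mathematical.
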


\begin{proof}
We first compute the $k^{th}$ term in the Dwyer filtration of $H_{2}(G), k$ finite.  Note that
\begin{align}
&H_{2}(G) \cong \frac{R \cap [F,F]}{[F,R]}, \text{ and }\\
&H_{2}\left(\frac{G}{\gamma_{k}(G)}\right) \cong H_{2}\left(\frac{F}{\gamma_{k}(F)\cdot R}\right) \cong \frac{\gamma_{k}(F)\cdot R \cap [F,F]}{[F,\gamma_{k}(F)\cdot R]} = \frac{\gamma_{k}(F)\cdot R \cap [F,F]}{\gamma_{k+1}(F)\cdot [F,R]},
\end{align}
where the last isomorphism follows since $\gamma_k(F)$ and $R$ are normal subgroups of $F$.
Combining the equations (1) and (2) above,
\begin{align*}
\phi_{k+1}(G) &\cong \ker\left\{\frac{R \cap [F,F]}{[F,R]} \to
\frac{\gamma_{k}(F)\cdot R \cap [F,F]}{\gamma_{k+1}(F)\cdot
[F,R]}\right\}\\ = &\frac{R \cap \gamma_{k+1}(F) \cdot
[F,R]}{[F,R]} \subset \frac{\gamma_{k+1}(F) \cdot [F,R]}{[F,R]} =
\gamma_{k+1}\left(\frac{F}{[F,R]}\right).
\end{align*}
Therefore
\[
\phi_{\omega}(G) = \bigcap_{k}\phi_{k}(G) \subset \gamma_{\omega}(G).
\]

To prove the other inclusion, we use the hypothesis that $G$ is residually nilpotent.

There is a quotient homomorphism $\frac{R}{[F,R]} \to
\frac{F}{R}$.  Given $g\cdot [F,R] \in
\gamma_{\omega}\left(\frac{R}{[F,R]}\right)$ it follows that $g\cdot
R \in \gamma_{\omega}\left(\frac{F}{R}\right)$, and since $F/R$ is
residually nilpotent, $g \in R$. Thus we have the following two relations.

\begin{align*}
\gamma_{\omega}\left(\frac{F}{[F,R]}\right) \subset \frac{R}{[F,R]} \qquad \qquad \gamma_{\omega}\left(\frac{F}{[F,R]}\right)  = \bigcap_{k}\frac{\gamma_{k}(F)\cdot[F,R]}{[F,R]}\\
\end{align*}
So for any $k$,
\[
\gamma_{\omega}\left(\frac{F}{[F,R]}\right) \subset
\frac{R}{[F,R]} \cap  \frac{\gamma_{k+1}(F)\cdot[F,R]}{[F,R]} =
\frac{R \cap \gamma_{k+1}{(F)}\cdot [F,R]}{[F,R]} = \phi_{k+1}(G),
\]
where the middle equality follows since $[F,R]$ is normal in $R$ and in $ \gamma_{k+1}(F)\cdot [F,R]$.\

Thus, $\gamma_{\omega}\left(\frac{F}{[F,R]}\right) \subset \phi_{\omega}(G)$, proving the Lemma.
\end{proof}

\section{The Fiber-Product Theorem}\label{FPTheorem}
We now prove a theorem which we view as an extension of Propositions~$3.1$ and $3.2$ of Bridson and Reid~\cite{Bridson-Reid:2012-1}.  Most notably, we make no assumptions about finite index subgroups.

We offer a further extension in a future paper with G. Baumslag, whose proof uses simplicial methods.  In spite of the elementary proof given below, this result underpins the remainder of the paper.

\begin{FP}\label{theorem:FP}
Suppose $Q$ is a super-perfect group with the following presentation, where $F$ is a free group.
\[
1 \to R \to F \to Q \to 1
\]
Consider the following pull-back diagram
\[
\begin{diagram}
\node{P} \arrow{e}\arrow{s} \node{F}\arrow{s,r}{p}\\
\node{F} \arrow{e,t}{p} \node{Q}
\end{diagram}
\]
and denote $P := F \times_{Q} F$, that is, $P = \{(x,y) \quad \vert \quad p(x) = p(y)\}$ is the fiber product over $p \colon F \twoheadrightarrow Q$.

Then,
\begin{enumerate}
\item The inclusion homomorphism
$F \times_{Q} F \to F \times F$
is homologically $2$-connected.
\item If $\ker\{H_{2}(F\times_{Q} F) \to H_{2}(F \times F)\} \cong 0$ then $H_{3}(Q) \cong 0$.
\end{enumerate}
\end{FP}

\begin{proof}
The homomorphism $F \times_Q F \to F$ splits since $F$ is free, and has kernel $R = \ker\{F \to Q\}$ since $P = F \times_Q F$ is a pullback with fiber $R$.  Thus, $F \times_Q F \cong R \rtimes F$.  Hence,
\[
H_1(F\times_Q F) \cong H_1(R \rtimes F) \cong R/[F,R] \oplus F_{ab},
\]
where $F_{ab}$ is the abelianization of the free group $F$.  We
have a commutative diagram where the bottom horizontal
homomorphism is a sum of a homomorphism induced by the inclusion
$inc \colon R \to F$ and the identity.
\[
\begin{diagram}
\node{H_1(F \times_Q F)} \arrow[3]{e}\arrow{s,r}{\cong} \node[3]{H_1(F \times F)}\arrow{s,r}{\cong}\\
\node{R/[F,R] \oplus F_{ab}} \arrow[3]{e,t}{(inc_*)\oplus id}\node[3]{F_{ab} \oplus F_{ab}}
\end{diagram}
\]
The extension $R \rightarrowtail F \twoheadrightarrow Q$ determines the usual $5$-term exact sequence on homology groups,
\[
0 \to H_2(Q) \to R/[F,R] \xrightarrow{inc_*} F_{ab} \to H_1(Q) \to 0.
\]
Combining this with the above commutative square of homomorphisms, we get the following elegant exact sequence:
\begin{equation}\label{exact sequence}
0 \to H_2(Q) \to H_1(F \times_Q F) \xrightarrow{inc} H_1(F \times F) \to H_1(Q) \to 0.
\end{equation}
Thus, $H_1(F \times_Q F) \to H_1(F \times F)$ is an isomorphism if and only if $Q$ is super-perfect, that is, $H_1(Q) \cong H_2(Q) \cong 0$.

To compute the image of the second homology, the homomorphism of presentations
\[
\begin{diagram}
\node{1} \arrow{e} \node{R} \arrow{e}\arrow{s} \node{F} \arrow{e}\arrow{s} \node{Q} \arrow{e} \arrow{s} \node{1}\\
\node{1} \arrow{e} \node{F} \arrow{e,t}{=} \node{F} \arrow{e} \node{1}
\end{diagram}
\]
yields a homomorphism of exact sequences, as given in Lemma~\ref{lemma:EM}, as follows.
\[
\begin{diagram}
\node{H_4(Q)} \arrow{e} \node{H_0(Q; R_{ab}\otimes R_{ab})} \arrow{e}\arrow{s,A} \node{H_2(F \times_Q F)} \arrow{e,A} \arrow{s} \node{H_3(Q)}\\
\node[2]{H_0(1; F_{ab} \otimes F_{ab})} \arrow{e,t}{\cong}
\node{H_2(F \times F)}
\end{diagram}
\]

Since $R_{ab} \to F_{ab}$ is onto (that is, since $Q$ is perfect
and using exact sequence~(\ref{exact sequence}) given
above,) the left vertical homomorphism is onto as shown below.
\[
H_0(Q; R_{ab} \otimes R_{ab})\cong (R_{ab} \otimes R_{ab}) \otimes_{\Z Q} \Z \twoheadrightarrow F_{ab} \otimes F_{ab} \cong H_0(1; F_{ab} \otimes F_{ab})
\]
Thus, $H_2(F\times_Q F) \to H_2(F \times F)$ is onto as claimed.

To prove statement $(2)$, if $H_2(F\times_Q F) \to H_2(F \times
F)$ is $1$-$1$, then it is an isomorphism, and hence, $H_0(Q,
R_{ab} \otimes R_{ab}) \to H_2(F \times_Q F)$ is onto, implying
that $H_3(Q) \cong 0$ as claimed.
\end{proof}

\begin{remark}
Fibre Product Theorem \ref{theorem:FP} can be generalized to the
case of fibre products with many summands. For $n\geq 2$ and an
epimorphism $p: F\to Q$, consider the higher fibre product
$$
F_n(p):=\underbrace{F\times_Q\times\dots\times_QF}_n=\{(x_1,\dots,
x_n)\in F^{\times n}\ |\ p(x_1)=\dots=p(x_n)\}
$$
Then the natural inclusion
$$
F_n(p)\hookrightarrow F^{\times n}
$$
is homologically 2-connected provided $Q$ is super-perfect.
\end{remark}
The proof follows by induction on $n$, using the natural split
exact sequence, $(n\geq 3)$,
$$
1\to R\to F_n(p)\to F_{n-1}(p)\to 1.
$$
\section{Applications to $H\Z$-localization and closure of groups}

We begin by describing a new construction using the Fiber-Product Theorem~\ref{theorem:FP}.

Let $Q$ be a finitely presented perfect group with a balanced
presentation, that is $Q$ has a presentation with an equal number
of generators and relations. As observed following
Definition~\ref{defi:super-perfect}, this implies $Q$ is
super-perfect.

Let $F$ be the free group on the generating set for $Q$, with $F
\twoheadrightarrow Q$ the associated epimorphism.  Let $R =
\ker\{F \twoheadrightarrow Q\}$. As noted previously, $F \times_Q
F \cong R \rtimes F$ where $F$ acts on $R$ by conjugation.

Note that if $Q$ has $k$ generators, and therefore $k$ relations,
then $R$ is normally generated by $k$ elements, and the group $F
\times_Q F \cong R \rtimes F$ is generated by $2k$-elements.
Since $Q$ is super-perfect, $H_1(F \times_Q F) \cong H_1(F \times
F) \cong \Z^{2k}$ by statement~$(2)$ of the Fiber Product
Theorem~\ref{theorem:FP}, and so $F \times_Q F$ is generated by at
least $2k$ generators, and thus by exactly $2k$ generators.

Let $\E$ and $\F$ be free on $2k$ letters such that $F \times_Q F
\cong \E/K$, and $F \times F \cong \F/L$.  Since $\E$ is free, one
easily constructs a homomorphism $\E \to \F$ inducing the
following commutative diagram of groups.
\[
\begin{diagram}
\node{\frac{\E}{[\mathcal \E,K]}} \arrow{e}\arrow{s} \node{\frac{\F}{[\F,L]}}\arrow{s}\\
\node{\frac{\E}{K} \cong F \times_Q F}\arrow{e} \node{F \times F \cong \frac{\F}{L}}
\end{diagram}
\]
\begin{note*}
Let $Q$ be a perfect group with balanced presentation $\mathcal P$.  We denote the homomorphism above,
\[
\Phi_{(Q,\mathcal P)} \colon \frac{\E}{[\E,K]} \to \frac{\F}{[\F,L]}.
\]
\end{note*}

\begin{prop}\label{prop:free extension}
Let $Q$ be a perfect group with a balanced presentation $\mathcal P$, such that $H_3(Q) \not \cong 0$.  Then the following holds:
\begin{enumerate}
\item $\frac{\E}{[\E,K]}$ is a finitely generated group. \item
$\Phi_{(Q, \mathcal P)} \colon \frac{\E}{[\E,K]} \to
\frac{\F}{[\F,L]}$ is homologically $2$-connected. \item
$\gamma_{\omega}\left(\frac{\E}{[\E,K]}\right) \neq \{1\}$ and
$\gamma_{\omega +1}\left(\frac{\E}{[\E,K]}\right) = \{1\}$ \item
$\frac{\F}{[\F,L]}$ is residually nilpotent. \item If $Q$ is
finite, then $\frac{\E}{[\E,K]}$ is a finitely presented group.
\end{enumerate}
\end{prop}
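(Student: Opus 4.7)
The plan is to prove the five assertions essentially independently; parts (1), (3), (4), and (5) follow relatively directly from the Fiber-Product Theorem~\ref{theorem:FP} and Lemma~\ref{lemma:MP}, and part (2) is the main obstacle.

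Part (1) is immediate: $\E/[\E,K]$ is a quotient of the free group $\E$ on $2k$ letters. For part (5), when $Q$ is finite, $R$ has finite index in $F$ and is thus free of finite rank by Nielsen-Schreier, so $F \times_Q F \cong R \rtimes F$ is finitely presented. Hence $K$ is finitely normally generated in $\E$, say by $r_1, \ldots, r_m$; standard commutator identities together with the normality of $[\E,K]$ in $\E$ then show $[\E,K]$ is normally generated in $\E$ by the finite set $\{[e_i, r_j]\}$, giving finite presentability of $\E/[\E,K]$.

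For part (4), I would apply Lemma~\ref{lemma:MP} to obtain $\gamma_\omega(\F/[\F,L]) \cong \phi_\omega(F \times F)$. Since the lower central series respects direct products, $(F \times F)/\gamma_k(F \times F) = F/\gamma_k F \times F/\gamma_k F$, and the Künneth formula identifies $H_2(F \times F) = F_{ab} \otimes F_{ab}$ with the $H_1 \otimes H_1$ summand of $H_2$ of this nilpotent quotient, the natural map being the inclusion; so $\phi_k = 0$ for $k \geq 3$ and $\phi_\omega = 0$. For part (3), $\E/K = F \times_Q F$ is residually nilpotent as a subgroup of $F \times F$, so Lemma~\ref{lemma:MP} gives $\gamma_\omega(\E/[\E,K]) \cong \phi_\omega(F \times_Q F)$. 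By Stallings' theorem and FPT part (1), the inclusion $F \times_Q F \hookrightarrow F \times F$ induces isomorphisms on all finite nilpotent quotients, so $\phi_k(F \times_Q F) \supseteq \ker\{H_2(F \times_Q F) \to H_2(F \times F)\}$ for every $k$; by FPT part (2) this kernel is nonzero (since $H_3(Q) \neq 0$), so $\gamma_\omega(\E/[\E,K]) \neq 1$. Moreover, residual nilpotence of $\E/K$ forces $\gamma_\omega(\E/[\E,K]) \subseteq K/[\E,K]$, which is central in $\E/[\E,K]$, so $\gamma_{\omega+1}(\E/[\E,K]) = 1$.

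For part (2), I would first use the five-term exact sequence applied to $[\E,K] \to \E \to \E/[\E,K]$ (with $H_2(\E) = 0$ and $[\E,K] \subseteq [\E,\E]$) to identify $H_2(\E/[\E,K]) \cong [\E,K]/[\E,[\E,K]]$ and $H_1(\E/[\E,K]) \cong \E_{ab} \cong \Z^{2k}$, and analogously on the $\F$ side; the $H_1$ assertion then follows immediately. For $H_2$ surjectivity, choose a lift $\tilde\iota : \E \to \F$ of the inclusion $\iota : \E/K \hookrightarrow \F/L$. The five-term sequence applied also to $K \to \E \to \E/K$ identifies $K/[\E,K] \cong H_2(\E/K)$; under this iso the induced map $K/[\E,K] \to L/[\F,L]$ becomes the FPT surjection on $H_2$, so $L = \tilde\iota(K) \cdot [\F,L]$. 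Since $\tilde\iota$ is an isomorphism on abelianization, every $f \in \F$ decomposes as $\tilde\iota(e) f'$ with $f' \in [\F,\F]$. Writing $\ell = \tilde\iota(k) \ell'$ with $\ell' \in [\F,L]$ and expanding, $[f,\ell]$ reduces modulo $[\F,[\F,L]]$ to $\tilde\iota([e,k])$. The hard part will be showing $[f', \tilde\iota(k)] \in [\F,[\F,L]]$ for $f' \in [\F,\F]$: taking $f' = [a,b]$, the Hall-Witt identity expresses $[[a,b],\tilde\iota(k)]$ as a product of $[[b,\tilde\iota(k)],a]$ and $[[\tilde\iota(k),a],b]$, up to conjugations which vanish in our abelian quotient; both of these summands lie in $[\F,[\F,L]]$ because their inner commutators already lie in $[\F,L]$.
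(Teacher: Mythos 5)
Your proof is correct, and parts (1), (3), (4) follow essentially the paper's own route (your K\"unneth computation of $\phi_\omega(F\times F)=0$ just fleshes out what the paper dismisses with ``one easily computes''). Where you genuinely diverge is in parts (2) and (5). For the surjectivity on $H_2$ in (2), the paper compares the Hochschild--Serre spectral sequences of the two central extensions $K/[\E,K]\rightarrowtail\E/[\E,K]\twoheadrightarrow\E/K$ and $L/[\F,L]\rightarrowtail\F/[\F,L]\twoheadrightarrow\F/L$, obtaining a map of low-degree exact sequences with a quotient of $\Lambda^2$ of the centre on one end and a quotient of $H_1(\text{quotient})\otimes(\text{centre})$ on the other, and deduces surjectivity in the middle from surjectivity of the two outer maps (both supplied by the Fiber-Product Theorem); you instead use Hopf's formula $H_2(\E/[\E,K])\cong[\E,K]/[\E,[\E,K]]$, $H_2(\F/[\F,L])\cong[\F,L]/[\F,[\F,L]]$, and a direct commutator computation whose inputs are the same two facts from the Fiber-Product Theorem ($L=\tilde\iota(K)\cdot[\F,L]$ and the $H_1$ isomorphism) plus the containment $[[\F,\F],L]\subseteq[\F,[\F,L]]$. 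That containment is exactly the three subgroups lemma; one small repair to your wording: in the Hall--Witt step the point is not that conjugations ``vanish in an abelian quotient'' (they are by arbitrary elements of $\F$, and $\F/[\F,[\F,L]]$ is not abelian) but simply that the two relevant terms already lie in the normal subgroup $[\F,[\F,L]]$, hence so do their conjugates; with that phrasing your argument is complete, and one also extends from single commutators to products in the usual way, as you indicate. For (5), the paper cites the 1-2-3 Lemma of Baumslag--Bridson--Miller--Short to get $F\times_Q F$ finitely presented and then uses that a central extension of a finitely presented group by a finitely generated abelian group is finitely presented; you exploit finiteness of $Q$ more directly ($R$ has finite index in $F$, so $R\rtimes F$ is visibly finitely presented) and then pass to $\E/[\E,K]$ via finite normal generation of $[\E,K]$ by the commutators $[e_i,r_j]$, which is a correct and more elementary, self-contained route, whereas the paper's citations are the ones that would survive weakening ``$Q$ finite'' to finiteness-type hypotheses on $Q$.
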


\begin{proof}
To see $(1)$, $\E$ is free on twice the number of generators of $Q$ and thus $\E/[\E,L]$ is finitely generated.

To prove statement $(2)$, first note that since $Q$ is super-perfect, the homomorphism $H_1(F \times_Q F) \to H_1(F \times F)$ is an isomorphism by the Fiber Product Theorem~\ref{theorem:FP}, and these first homology groups have rank $2k$ since $F \times F$ has rank $2k$. Since $\E$ and $\F$ are free of rank $2k$ as well, it follows that $H_1(\E/[\E,K]) \to H_1(\F/[\F.L])$ is an isomorphism of rank $2k$ groups.

To prove $\Phi_{(Q, \mathcal P)}$ induces an epimorphism on second homology, we compare the spectral sequences associated to the extensions
\begin{equation}\label{extensions}
\frac{K}{[\E,K]} \rightarrowtail \frac{\E}{[\E,K]} \twoheadrightarrow \frac{\E}{K} \qquad \mbox{and} \qquad
\frac{L}{[\F,L]} \rightarrowtail \frac{\F}{[\F,L]} \twoheadrightarrow \frac{\F}{L}.
\end{equation}
For each of these spectral sequences the differential $d_{2,0} \colon E_{2,0}^2 \to E_{0,1}^2$ are onto since we have shown that the quotient homomorphisms in the extensions above are isomorphisms on abelianization. So we have an exact sequence
\[
E_{0,2}^{\infty} \to H_2 \to E_{1,1}^{\infty} \to \{0\}.
\]
So we have a homomorphism of short exact sequences, where $\frac{\Lambda^2(A)}{\sim}$ is the group of second exterior powers of $A$, that is $E_{2,0}^2$, modulo the image of differentials, and the right-most terms are quotients of $E_{1,1}^2$ by the image of $d_{3,0}^2$.
\[
\begin{diagram}
\node{\frac{\Lambda^2(K/[\E,K])}{\sim}} \arrow{e}\arrow{s,r}{\alpha} \node{H_2(\E/[\E,K])}\arrow{e,A}\arrow{s,r}{\Phi_{(Q, \mathcal P)_*}} \node{\frac{H_1(\E/K; K/[\E,K])}{\sim}}\arrow{s,r}{\beta}\\
\node{\frac{\Lambda^2(L/[\F,L])}{\sim}} \arrow{e} \node{H_2(\F/[\F,L])} \arrow{e,A}\node{\frac{H_1(\F/L; L/[\F,L])}{\sim}}
\end{diagram}
\]

By statement~$(1)$ of the Fiber Product Theorem~\ref{theorem:FP}, and since $H_2(\E/K) \cong K/[\E,K]$ and $H_2(\F/L) \cong L/[\F,L]$, it follows that the homomorphism $K/[\E,K] \to L/[\F,L]$ is onto.  Hence $\alpha$ is onto in the  above diagram.

Since the displayed extensions above, numbered~(\ref{extensions}), are central extensions, $H_1(\E/K; K/[\E,K]) \cong H_1(\E/K) \otimes K/[\E,K]$, and $H_1(\F/L; L/[\F,L]) \cong H_1(\F/L) \otimes L/[\F,L]$.  Since $H_1(\E/K) \cong H_1(\F/L)$, the homomorphism $\beta$ from the above diagram is onto as well.

Thus,
\[
\Phi_{(Q, \mathcal P)_*} \colon H_2(\E/[\E,K]) \to H_2(\F/[\F,L])
\]
is onto as claimed, proving statement $(2)$ of Proposition~\ref{prop:free extension}.

To prove statement $(3)$, first observe that $F \times_Q F \subset
F \times F$. The latter is residually nilpotent since the free
group $F$ is residually nilpotent.  Hence, $\E/K \cong F \times_Q
F$ is residually nilpotent. By Lemma~\ref{lemma:MP},
$\gamma_{\omega}(\E/[\E,K]) \cong \phi_\omega(\E/K)$.

We now show that $\phi_{\omega}(\E/K) \not \cong 0$.

Since $F \times_Q F \to F \times F$ is homologically $2$-connected
by part $(1)$ of the Fiber Product Theorem, it follows from
Stallings' Theorem that $F \times_Q F \to F \times F$ induces an
isomorphism of lower central series
quotients~\cite{Stallings:1965-1}. Thus, for all $k\geq 1$,
\[
\ker\left\{H_2(F\times_Q F) \to H_2\left(\frac{F \times
F}{\gamma_k(F \times F)}\right)\right\} = \phi_{k+1}(\E/K).
\]
Since $H_3(Q) \not \cong 0$, statement $(2)$ of the Fiber Product Theorem~\ref{theorem:FP} asserts that
\[
\ker\{H_2(F\times_Q F) \to H_2(F \times F)\} \not \cong 0.
\]
Hence, for all $k$
\[
\ker\{H_2(F\times_Q F) \to H_2(F \times F)\} \subset  \phi_k(\E/K).
\]
Hence $\phi_{\omega}(F \times_Q F) \not \cong 0$.

However, $\E/[\E,K] \to \E/K$ is a central extension by
construction, and since $F \times_Q F \subset F \times F$ is
residually nilpotent it follows that $\gamma_{\omega+1}(\E/[\E,K])
= \{1\}$.  This proves part $(3)$ of Proposition~\ref{prop:free
extension}.

Statement $(4)$ of Prposition~\ref{prop:free extension} follows
from Lemma~\ref{lemma:MP} as well, since $F \times F$ is
residually nilpotent and one easily computes the second equality
below.
\[
\gamma_{\omega}(\F/[\F,L])
 = \phi_{\omega}(F \times F) = 0\]

Finally, assume $Q$ is finite.  By the 1-2-3 Lemma of
G. Baumslag, M. Bridson, C. Miller III and H. Short
~\cite{Baumslag-Bridson-Martin-Miller}, it follows that $\E/K
\cong F \times_Q F$ is finitely presented.  So $H_2(F \times_Q F)
\cong K/[\E,K]$ is finitely presented, and therefore the group
$\E/[\E,K]$ in the following extension is finitely presented as
well.
\[
1 \to \frac{K}{[\E,K]} \to \frac{\E}{[\E, K]} \to \frac{\E}{K} \to
1
\]
\end{proof}

\section{Localization, closure, and relatively perfect subgroups}

We prove our main theorem.

\begin{theorem}\label{thm:main}
Let $Q$ be any finitely presented perfect group with a balanced presentation, such that $H_3(Q) \not \cong 0$.  (For instance, let $Q$ be the binary icosahedral group\footnote{A partial list of finite superperfect groups with non-trivial third homology group can be found at~\url{http://hamilton.nuigalway.ie/Hap/www/SideLinks/About/aboutSuperperfect.html}}.) Let $F \times_Q F \cong \E/K$ where $\E$ is free on the generating set of $Q$.  Let $G := \E/[\E,K]$ be the associated free central extension of $\E/K$.  Then
\begin{enumerate}
\item $G$ is a finitely generated group,
\item $\gamma_{\omega}(G)\neq \gamma_{\omega +1}(G)=\{1\}$,\item and if $E(G)$ represents either the Bousfield $H \Z$-localization of $G$, or if $E(G)$ is Levine's algebraic closure of $G$ and in
addition in this case, $Q$ is a finite group, then
\[
\gamma_{\omega}(G) = \ker\{G \to E(G)\}.
\]
In particular, in both cases, the $\ker\{G \to E(G)\}$ is
not a relatively perfect subgroup of $G$.
\end{enumerate}
\end{theorem}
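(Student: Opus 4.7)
Statements (1) and (2) of the theorem are immediate restatements of parts (1) and (3) of Proposition~\ref{prop:free extension} applied to $G = \E/[\E,K]$, so the substantive content is the equality $\gamma_\omega(G) = \ker\{G \to E(G)\}$ in statement (3). The plan is to establish this equality by two inclusions and then deduce the ``not relatively perfect'' conclusion as a corollary.

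For the inclusion $\ker\{G \to E(G)\} \subseteq \gamma_\omega(G)$, I would apply Stallings' theorem~\cite{Stallings:1965-1} to the natural, homologically $2$-connected map $G \to E(G)$ (explicit for Bousfield by Definition~\ref{defi:localization}, and a standard fact in Levine's setting). Stallings produces isomorphisms $G/\gamma_k(G) \xrightarrow{\cong} E(G)/\gamma_k(E(G))$ for every finite $k$, so any element killed by $G \to E(G)$ lies in $\gamma_k(G)$ for every $k$, and hence in $\gamma_\omega(G)$.

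For the reverse inclusion $\gamma_\omega(G) \subseteq \ker\{G \to E(G)\}$, I would exploit the comparison map $\Phi_{(Q,\mathcal P)} \colon G \to \F/[\F,L]$. Proposition~\ref{prop:free extension}(2) shows $\Phi_{(Q,\mathcal P)}$ is homologically $2$-connected, and part (4) shows its target is residually nilpotent, so $\gamma_\omega(G) \subseteq \ker\{G \to \F/[\F,L]\}$. Functoriality of $E$ applied to the $2$-connected map $\Phi_{(Q,\mathcal P)}$ yields an isomorphism $E(G) \xrightarrow{\cong} E(\F/[\F,L])$, and naturality makes the resulting square with edges $G \to E(G)$, $G \to \F/[\F,L]$, $E(G) \cong E(\F/[\F,L])$, and $\F/[\F,L] \to E(\F/[\F,L])$ commute. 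Consequently any element killed by $G \to \F/[\F,L]$ is killed by $G \to E(G)$, completing the equality. The ``not relatively perfect'' claim then follows at once since $[G, \gamma_\omega(G)] = \gamma_{\omega+1}(G) = \{1\} \neq \gamma_\omega(G)$.

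The main obstacle I anticipate is handling the Levine setting carefully, since his closure inverts only a restricted class of homologically $2$-connected maps. One must verify: that the source $G$ is finitely presented, which uses Proposition~\ref{prop:free extension}(5) and hence the hypothesis that $Q$ is finite; that the target $\F/[\F,L]$ is finitely presented, which follows from $F \times F$ being finitely presented together with the centrality of the kernel $L/[\F,L] \cong F_{ab} \otimes F_{ab}$, a finitely generated abelian group; and that the image of $\Phi_{(Q,\mathcal P)}$ normally generates $\F/[\F,L]$, which holds because $\Phi_{(Q,\mathcal P)}$ is surjective on the matched generating sets by construction. Once these conditions are confirmed, the same argument goes through uniformly in both settings.
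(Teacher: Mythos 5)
Your route is the same as the paper's: statements (1) and (2) are read off from Proposition~\ref{prop:free extension}, and the kernel is identified by comparing $G$ with $\F/[\F,L]$ through the homologically $2$-connected map $\Phi_{(Q,\mathcal P)}$, using residual nilpotence of the target for $\gamma_\omega(G)\subseteq\ker\{G\to E(G)\}$ and a Stallings-type argument for the reverse inclusion. You are in fact more explicit than the paper about both inclusions and about the finite presentability of $\F/[\F,L]$. For the reverse inclusion in Levine's setting, the assertion that $G\to E(G)$ is homologically $2$-connected (equivalently, that the closure preserves nilpotent quotients) is indeed a result of Levine for finitely presented groups and should be cited rather than called standard, but since $G$ is finitely presented here (as $Q$ is finite) this is fine.

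The one genuine error is your justification of the normal-generation hypothesis needed for Levine's closure to invert $\Phi_{(Q,\mathcal P)}$. The map $\Phi_{(Q,\mathcal P)}$ is \emph{not} surjective: the lift $\E\to\F$ was only chosen to make the comparison square commute, and the image of $\Phi_{(Q,\mathcal P)}$ in $\F/L\cong F\times F$ is the proper subgroup $F\times_Q F$, so ``surjective on the matched generating sets'' is false. The hypothesis does hold, but the argument needs the perfectness of $Q$. For $q,x\in F$ one computes $(q,1)(x,x)(q^{-1},1)(x^{-1},x^{-1})=(qxq^{-1}x^{-1},1)$, so the normal closure of $F\times_Q F$ in $F\times F$ (which contains the diagonal and $R\times 1$) contains $([F,F]\cdot R)\times 1 = F\times 1$, and symmetrically $1\times F$; hence $F\times_Q F$ normally generates $F\times F$. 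Passing to the central extension, if $N$ denotes the normal closure of the image of $\Phi_{(Q,\mathcal P)}$ in $\F/[\F,L]$, the previous step gives $N\cdot(L/[\F,L])=\F/[\F,L]$, so the quotient by $N$ is abelian, being a quotient of the central subgroup $L/[\F,L]$; since $H_1(\Phi_{(Q,\mathcal P)})$ is onto, this abelian quotient is trivial and $N=\F/[\F,L]$. With that repair (a check the paper itself leaves implicit), your argument is complete.
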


\begin{proof}
All of the steps in this theorem follow from Proposition~\ref{prop:free extension}.

Observe that $\E/[\E,K]$ is finitely generated by Proposition~\ref{prop:free extension}, statement $(1)$.  Also, by statement $(2)$ of Proposition~\ref{prop:free extension}, the homomorphism
\[
\frac{\E}{[\E,K]} \to \frac{\F}{[\F,L]}
\]
is homologically $2$-connected.  $\gamma_{\omega}(G) \neq \gamma_{\omega+1}(G) = \{1\}$ by statement $(3)$ of Proposition~\ref{prop:free extension}.

We first prove the result for Bousfield $H \Z$-localization of groups.

Since $F \times_Q F \to F \times F$ is homologically
$2$-connected, this homomorphism induces an isomorphism of
$H\Z$-localizations, $E(F \times_Q F) \to E(F \times F)$.  By
statements $(3)$ and $(4)$ of Proposition~\ref{prop:free
extension}, $\gamma_{\omega}(G) = \gamma_{\omega}(\E/[\E,K]) \neq \{1\}$, and has
trivial image in $\F/[\F,L]$ by statement $(4)$.  But
$\gamma_{\omega}(\E/[\E,K])$ is not relatively perfect, by
statement $(3)$.

The proof for Levine's closure of groups needs only a small modification.

Since $Q$ is finite in this case, $\E/[\E,K]$ is finitely
presented (by statement $(1)$ of Proposition~\ref{prop:free
extension}.)  The group $\F/[\F,L]$ is finitely presented as well.
Since, {\em in addition}, $\E/[\E,K] \to \F/[\F,L]$ is
homologically $2$-connected, we get an isomorphism of closures,
$E(\E/[\E,K]) \to E(\F/[\F,L])$.  So $\gamma_{\omega}(K/[\E,K]) =
\ker\{\E/[\E,K] \to \F/[\F,L]\}$.  But $K/[\F,K]$ is not
relatively perfect in the sense of Levine, proving the Theorem.
\end{proof}

\bibliographystyle{amsalpha}

\bibliography{research}

\end{document}